\numberwithin{equation}{section}
\newtheorem{theorem}[equation]{Theorem}
\theoremstyle{definition}
\newtheorem{definition}[equation]{Definition}
\newtheorem{example}[equation]{Example}
\newtheorem{conjecture}[equation]{Conjecture}
\newtheorem{remark}[equation]{Remark}
\begin{document}
\title[Order dividing bijective function from non-cyclic]
{Order dividing bijective function from non-cyclic\\ to cyclic groups of same finite order}
\subjclass[2010]{20D99}
\keywords{Cylic group, dihedral group, permutation group, quaternion group, direct product.}
\author{Austin Allen}
\email{tallen12010@live.com}
\author{Ashley Chen}
\email{ashley.j.chen@gmail.com}
\author{Jessica Ding}
\email{jess.ding.77@gmail.com}
\author{Piyush Shroff}
\email{piyushilashroff@gmail.com}
\address{Department of Mathematics, Texas State University,
San Marcos, Texas 78666, USA}
\date{December 21, 2016}

\begin{abstract}
%Group theory has important applications in the fields of quantum physics and 
%pure mathematics as it attempts to elucidate the symmetries of the natural world. A group is defined as a set of elements with a binary operation * 
%such that it follows the properties of associativity, identity, 
%and has an inverse for each element belonging to the group. 
%A cyclic group is one that can be generated by a single element, whereas a non-cyclic group cannot. 
%This means that non-cyclic groups have properties that 
%cannot be as easily analyzed as the properties of cyclic groups, so 
%our research is concerned with relating non-cyclic groups to cyclic groups by finding a bijective function. 
%There have been previous attempts at finding a bijective function 
%from groups of the same finite order, most notably the proof of solvable groups 
%by F. Ladisch\footnote{Frieder Ladisch, \textit{"This is only a partial answer..."}, MathOverflow, 2012. }, but the question still remains open. 
%Although our research is concerned with any given group 
%of finite order n, we ignore the case where two cyclic groups are mapped 
%to each other because they have the same fundamental structure, meaning 
%that there is an isomorphism between them, which is trivial to our research. 
%We develop an argument using the non-cyclic dihedral groups 
%${D_{2n}}$ and map them to the integers under modulo n, $\mathbb{Z}_{2n}$. \par

In this article we give an order-dividing bijective function between cyclic and non cyclic groups of finite order. In particular, we prove that there exists a bijective function from $D_{2n}$ to $\mathbb{Z}_{2n}$ for any natural integer $n$; and from $\mathbb{Z}_p \times \mathbb{Z}_k$ to $\mathbb{Z}_{pk}$ when $p$ is an odd prime and $k$ is not a multiple of $p$. 
%In addition, we found bijective functions from $U(12)$, $Q_8$, to $\mathbb{Z}_4$, $\mathbb{Z}_8$, respectively, and bijective maps from $S_3$, $K_4$ to $\mathbb{Z}_6$ and $\mathbb{Z}_4$, respectively.
%
%We plan to extend our research by including classes of groups 
%known as the permutation groups $S_n$ and generalizing for groups known as the cross product of groups $\mathbb{Z}_{n} \times \mathbb{Z}_{m}$.

%not included as part of the paper (not a part of the 18 page limit)
%should add page break before this?
\end{abstract}
\maketitle
%%%%%%%%%%%%%%%%%%%%%%%%%%%%%%%%%%%%%%%%%%%%%%%%%%%%
%%%%%%%%%%%%%%%%%%%%%%%%%%%%%%%%%%%%%%%%%%%%%%%%%%%%
\begin{section}{Introduction}

%Group theory is an abstract field of mathematics whose applications often lie in the foundations of many sciences and other mathematics. 
%Rather than providing immediate impacts or applications throughout our world, advancements in group theory have the potential to shift the foundations of other vastly important scientific fields, from graph theory to quantum physics, in such a way that further advancements in those fields can be made much more efficiently.
%The key properties that make group theory such an encompassing field to study lie in its studies in symmetry and invariants as well as its ability to generalize many different groups from various other fields under some common structures. 
%These characteristics of structure, order, and symmetry in a more abstract realm allow us to propose and prove theorems that may seem less obvious in the field the group originated from. \par 
%used personal pronouns: we and us
%this is: broad picture of group theory

%Therefore, the question of establishing a bijective function connecting a non-cyclic and cyclic group through the orders of their corresponding elements follows the motivations of group theory: creating links between groups to make certain groups easier to analyze and understand.
The problem was proposed in The Kourovka Notebook No. 18 by I.M. Isaacs \cite{MK}. Frieder Ladisch proved it for solvable groups \cite{La}.
In \cite{AMS}, an article was published proving that the order of the element from the non-cyclic group was greater than or equal to the order of the element from the cyclic group that it was mapped to. %[don't forget citations] 
\end{section}
%%%%%%%%%%%%%%%%%%%%%%%%%%%%%%%%%%%%%%%%%%%%%%%%%%%%%
%%%%%%%%%%%%%%%%%%%%%%%%%%%%%%%%%%%%%%%%%%%%%%%%%%%%%

\begin{section}{Preliminaries}
All the preliminary definitions and theorems can be found in any undergraduate Group Theory textbook. In particular, authors have referred \cite{DF}, \cite{F}, and \cite{G}. However, we recall following Theorem from \cite{DF}.

\begin{theorem}
Let $G$ be a cyclic group and $a$ an element in $G$ where $o(a) = n$.
Let $k \in \mathbb{N}$. Then
$o(a^k) = \frac{n}{gcd(n, k)}$. 
\end{theorem}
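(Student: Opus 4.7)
The plan is to set $d = \gcd(n,k)$ and show both that $o(a^k)$ divides $n/d$ and that $n/d$ divides $o(a^k)$, so the two are equal.

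First I would write $n = dn'$ and $k = dk'$ where, by the definition of the greatest common divisor, $\gcd(n', k') = 1$. The easy direction is then a direct computation: raise $a^k$ to the power $n' = n/d$ to obtain
\[
(a^k)^{n/d} = a^{k n / d} = a^{k' n} = (a^n)^{k'} = e,
\]
using that $o(a) = n$. Since a power equals the identity, the order of $a^k$ must divide the exponent, giving $o(a^k) \mid n/d$.

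For the reverse divisibility, I would suppose $(a^k)^m = e$ for some positive integer $m$ and deduce $a^{km} = e$. Since $o(a) = n$, this forces $n \mid km$, i.e., $dn' \mid dk'm$, so $n' \mid k'm$. The key observation is that $\gcd(n', k') = 1$, so by a standard number-theoretic lemma (often attributed to Euclid) we may cancel and conclude $n' \mid m$. Applying this to $m = o(a^k)$ yields $n/d \mid o(a^k)$.

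Combining the two divisibilities gives $o(a^k) = n/d = n/\gcd(n,k)$, as desired. The only subtle point — and the step where a careless proof could go wrong — is the cancellation step that uses coprimality of $n/d$ and $k/d$; everything else is a routine exponent manipulation. Note that the hypothesis that $G$ be cyclic is not really used beyond the fact that $a$ has finite order $n$, so the statement in fact holds for any element of finite order in any group.
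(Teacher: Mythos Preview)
Your proof is correct and is the standard two-divisibility argument for this classical fact. Note, however, that the paper does not actually supply its own proof of this theorem: it is merely quoted from Dummit and Foote \cite{DF} as a preliminary result, so there is no in-paper argument to compare against. Your write-up is essentially the textbook proof one would find there, and your closing remark that cyclicity of $G$ is not really needed (only finiteness of $o(a)$) is a valid observation.
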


%\begin{corollary}
%Let $k \in \mathbb{Z}_{n}$. 
%$k$ is a generator of $\mathbb{Z}_{n} 
%\iff gcd(n, k) = 1$. 
%\end{corollary}

\end{section}

\begin{section}{Dihedral Groups}

\begin{definition}\cite{DF} 
The dihedral group $D_{2n}$ is defined as $$D_{2n}=\langle r, s | r^n = 1 = s^2, rs = sr^{-1} \rangle.$$
\end{definition}

%Using generator notation, the dihedral group can be referred to as 
%$D_{2n}=\langle r, s | r^n = 1 = s^2, rs = sr^{-1} \rangle$.

\begin{theorem}
All elements of the form $sr^b$, $b \in \mathbb{Z}_{n}$, 
are of order $2$.
\end{theorem}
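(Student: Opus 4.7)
The plan is to show that $(sr^b)^2 = 1$ directly from the defining relations, and then separately verify that $sr^b \neq 1$, so that the order is exactly $2$ rather than $1$.

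First I would establish the auxiliary identity $r^b s = s r^{-b}$ for every $b \in \mathbb{Z}_n$. The base case $b = 1$ is just the defining relation $rs = sr^{-1}$. Proceeding by induction on $b \geq 1$, one has $r^{b+1} s = r(r^b s) = r(s r^{-b}) = (rs)r^{-b} = (sr^{-1})r^{-b} = s r^{-(b+1)}$, and negative exponents follow symmetrically (or simply by observing that the identity $r^b s = s r^{-b}$ in $b$ is preserved under $b \mapsto -b$ after applying $s$ on both sides and using $s^2 = 1$).

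Next, using this identity, I would compute
\[
(sr^b)^2 = (sr^b)(sr^b) = s(r^b s)r^b = s(sr^{-b})r^b = s^2 r^{-b} r^b = 1 \cdot 1 = 1.
\]
This shows the order of $sr^b$ divides $2$, so it is either $1$ or $2$.

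Finally, to exclude the possibility that $sr^b = 1$ (which would give order $1$), I would note that the equation $sr^b = 1$ is equivalent to $s = r^{-b}$, placing $s$ inside the cyclic subgroup $\langle r \rangle$. Since $D_{2n}$ has order $2n$ while $\langle r \rangle$ has order $n$, the element $s$ lies outside $\langle r \rangle$, so $sr^b \neq 1$ and the order is exactly $2$. The main obstacle here is really bookkeeping: justifying $r^b s = s r^{-b}$ cleanly for all integers $b$ (positive, negative, and zero) and ensuring that the argument $s \notin \langle r \rangle$ is made rigorously from the presentation rather than waved at. Everything else is a one-line computation.
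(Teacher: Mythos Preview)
Your proof is correct and follows essentially the same approach as the paper: both arguments repeatedly use the relation $rs = sr^{-1}$ to move $s$ past powers of $r$ and reduce $(sr^b)^2$ to $s^2 = 1$. The only difference is packaging---you isolate the commutation identity $r^b s = s r^{-b}$ as an inductive lemma and give a more explicit justification that $sr^b \neq 1$, whereas the paper performs the same reduction inline by iteration and simply asserts that $sr^b \neq 1$.
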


\begin{proof}
Let $sr^b$ be an element of the dihedral group of order $2n$.
It is clear that $sr^b \neq 1$, 
even if $b = 0$.
The next smallest positive integer to check is $2$.
$$(sr^b)^2 = (sr^b)(sr^b) \Rightarrow 
(sr^{b-1})rs(r^b).$$
Using the relation $rs = sr^{-1}$, we get
$$(sr^{b-1})sr^{-1}r(r^{b-1}) 
=(sr^{b-2})rs(r^{b-1})$$ 
Using it again results in 
$(sr^{b-2})sr^{-1}r(r^{b-2})$.
After a finite number of iterations, we finally get
$$srsr = ssr^{-1}r = s^2 = 1$$.
\end{proof}
\newpage
%This gives an idea of what some of the orders of certain elements look like.

%\subsection{Small Results} 

%\subsection{The Big Result}

\begin{theorem} 

For any natural integer $n$, 
there exists a function from 
$D_{2n}$ to $\mathbb{Z}_{2n}$ 
defined as $f(s^ar^b) = ka + 2b$, 
where $a$ belongs to \{$0, 1$\},
$b$ belongs to $\mathbb{Z}_{n}$,
and $k$ is an odd integer,
such that the order of $s^ar^b$ 
divides the order of $f(s^ar^b)$.

\end{theorem}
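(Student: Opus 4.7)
My plan is to verify the divisibility condition separately in the two cases $a=0$ and $a=1$, in each case computing the order on both sides via the preliminary theorem on orders in cyclic groups.

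First I would treat the rotations, i.e.\ the case $a=0$. Here $f(r^b)=2b$, and the order of $r^b$ in $D_{2n}$ equals $n/gcd(n,b)$ by the preliminary theorem applied inside the cyclic subgroup $\langle r\rangle$. The same theorem applied in $\mathbb{Z}_{2n}$ gives the order of $2b$ as $2n/gcd(2n,2b)=n/gcd(n,b)$. So the two orders coincide and divisibility is immediate (in fact an equality).

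Next I would treat the reflections, i.e.\ the case $a=1$. By the theorem just proved, $sr^b$ has order exactly $2$ in $D_{2n}$, so the task reduces to showing that $f(sr^b)=k+2b$ has even order in $\mathbb{Z}_{2n}$. This is where the hypothesis that $k$ is odd enters: since $k$ is odd and $2b$ is even, their sum $k+2b$ is odd, which forces $gcd(2n,k+2b)=gcd(n,k+2b)$, a divisor of $n$. The preliminary theorem then gives the order of $k+2b$ as $2n/gcd(n,k+2b)=2\bigl(n/gcd(n,k+2b)\bigr)$, which is visibly even, so $2 \mid \mathrm{ord}(f(sr^b))$ as required.

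The argument presents no real obstacle; the only delicate point to articulate cleanly is that the parity of $k$ is exactly what forces the images of reflections into the odd residues of $\mathbb{Z}_{2n}$, whose orders in $\mathbb{Z}_{2n}$ are automatically even. A quick sanity check with $k$ even and small $n$ (e.g.\ $n=2$, $k=2$, $b=1$ gives $f(sr)=4\equiv 0$ of order $1$) shows the divisibility condition can fail without this hypothesis, which is what makes the oddness of $k$ a genuine hypothesis rather than a cosmetic choice.
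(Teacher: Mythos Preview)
Your proposal is correct and follows essentially the same route as the paper: split into the cases $a=0$ and $a=1$, use Theorem~2.1 to compute both orders in the rotation case (getting equality), and in the reflection case use that $k+2b$ is odd to conclude that $\gcd(2n,k+2b)$ divides $n$, whence the order of $f(sr^b)$ is even. The paper phrases the last step as ``$\gcd(2n,k+2b)$ is odd, hence divides $n$'' rather than your equivalent ``$\gcd(2n,k+2b)=\gcd(n,k+2b)$,'' and omits your closing sanity check, but these are cosmetic differences.
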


\begin{proof}  

It is clear to see that the function is bijective. It only remains to prove that $o(s^a r^b)$ divides $o(ka+2b)$ where $k$ is odd.
%It's easy to show that this function is bijective, as the operations used in the function are bijective itself. %sketchy reasoning here
%We use casework to prove that this function maps elements to each other so that their orders divide each other for all elements in $D_{2n}$ by splitting the elements into two disjoint subsets whose union is $D_{2n}$. %lengthy rambly schtuff

%\begin{proofpart} %Theorem 1 is the order of raised powers theorem

Consider the case when $a = 0$. 
All elements in this subset would be of the form $s^0r^b = r^b$, 
where $b$ belongs to $\mathbb{Z}_{n}$.
Now $f(r^b) = 2b$. Thus
we want to show that the order of $r^b$ divides
the order of $2b$. 
By Theorem 2.1, 
$$o(2b) = o(1^{2b}) = \frac{2n}{gcd(2n, 2b)} = \frac{n}{gcd(n, b)}$$ 

%Since both $2n$ and $2b$ have a factor of 2 in common, 
%we factor that out and can simplify the equation to
%$$o(2b) = \frac{2n}{2gcd(n, b)} = \frac{n}{gcd(n, b)}$$  

Again by Theorem 2.1, 
$$o(r^b) = \frac{o(r)}{gcd(o(r), b)}$$  

%By the generating relation of dihedral groups, we know that
%$o(r) = n$, so
$$o(r^b) = \frac{n}{gcd(n, b)}$$ 

Thus,
$o(r^b) = o(2b)$. 
Hence, $o(r^b)$ divides $o(f(r^b))$. 

%\end{proofpart}

%\begin{proofpart} %Theorem 2 is how it's stated - put in intro of this paper

Consider the case when $a = 1$.
By Theorem 3.2, any element of the form $sr^b$ has an order of $2$.
The corresponding output of each element can be expressed as $k + 2b$. 
By Theorem 2.1,
$$o(k + 2b) = o(1^{k + 2b}) = \frac{2n}{gcd(2n, k+2b)} = 2 \cdot \frac{n}{gcd(2n, k + 2b)}.$$ 

Note that since $k+2b$ is odd, $gcd(2n, k+2b)$ must be an odd integer.
%We know that $\frac{n}{gcd(2n, k + 2b)}$ must be an integer
%because we know that the order of $k + 2b$ must be an integer. 
Thus, $gcd(2n, k + 2b)$ must divide $n$, so 
$\frac{n}{gcd(2n, k + 2b)}$ is an integer. 
Hence,
%$$o(k+2b) = 2 \cdot \frac{n}{gcd(2n, k + 2b)}.$$
%Since $2$ divides $2 \cdot \frac{n}{gcd(2n, k + 2b)}$,
$o(sr^b)$ divides $o(f(sr^b))$. 

%\end{proofpart} 

\end{proof}
%Since the order of each element in $D_{2n}$ divides the order of the %elements output in $\mathbb{Z}_{2n}$ and the function is bijective, %this theorem is true. 

\begin{example}
Consider $f:D_6\rightarrow \mathbb{Z}_6$ defined by, 
$f(s^ar^b) = a + 2b$ and
$f(s^ar^b) = 5a + 2b$.

\begin{table}[h]
    \centering
    \begin{tabular}{| c | c | c | c |}
    \hline
    Order of $D_6$ & $D_6$ & $\mathbb Z_6$ & Order of $\mathbb Z_6$ \\
    \hline
    $1$ & $1$ & $0$ & $1$ \\
    $3$ & $r$ & $2$ & $3$ \\
    $3$ & $r^2$ & $4$ & $3$ \\
    $2$ & $s$ & $1$ & $6$ \\
    $2$ & $sr$ & $3$ & $2$ \\
    $2$ & $sr^2$ & $5$ & $6$ \\
    \hline
\end{tabular}
\caption{Map from $D_6 \rightarrow \mathbb Z_6$ ($f(s^ar^b) = a + 2b$)}
\end{table}

\begin{table}[h]
    \centering
    \begin{tabular}{| c | c | c | c |}
    \hline
    Order of $D_6$ & $D_6$ & $\mathbb Z_6$ & Order of $\mathbb Z_6$ \\
    \hline
    $1$ & $1$ & $0$ & $1$ \\
    $3$ & $r$ & $2$ & $3$ \\
    $3$ & $r^2$ & $4$ & $3$ \\
    $2$ & $s$ & $5$ & $6$ \\
    $2$ & $sr$ & $1$ & $6$ \\
    $2$ & $sr^2$ & $3$ & $2$ \\
    \hline
\end{tabular}
\caption{Map from $D_6 \rightarrow \mathbb Z_6$ ($f(s^ar^b) = 5a + 2b$)}
\end{table}

%[where is the 5a+2b example]
%[where is the $D_8$ case]
\end{example}

%While investigating bijective functions for dihedral groups,
%we observed a pattern among functions that seemed to be important. \par
%wow terrible description/transition

\begin{conjecture} %continue looking for a proof
Given that $f(s^ar^b) = xa + yb$ is a order dividing bijective function 
where $x, y \in \mathbb{Z}$, then
$f(s^ar^b) = ya + xb$ is not a order dividing bijective function.
\end{conjecture}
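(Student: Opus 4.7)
The plan is to suppose, toward contradiction, that both $f(s^ar^b) = xa + yb$ and the swapped map $g(s^ar^b) = ya + xb$ are order-dividing bijections $D_{2n} \to \mathbb{Z}_{2n}$, and then to show that this forces $n \le 2$, so that the conjecture holds for all $n \ge 3$.

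First I would translate the order-dividing hypothesis on the generators $r$ and $s$ into arithmetic conditions on $(x,y)$. Since $f(r) = y$, $o(r) = n$, and $o(y) = 2n/\gcd(2n, y)$ by Theorem~2.1, the condition $n \mid o(y)$ is equivalent to $\gcd(2n, y) \in \{1, 2\}$; similarly $f(s) = x$ with $o(s) = 2$ gives $\gcd(2n, x) \mid n$. Applying the same reasoning to $g$ yields the symmetric conditions $\gcd(2n, x) \in \{1, 2\}$ and $\gcd(2n, y) \mid n$. Therefore both $\gcd(2n, x)$ and $\gcd(2n, y)$ must lie in $\{1, 2\}$.

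The second step is to use bijectivity to pin down $(x, y)$ further. If $\gcd(2n, y) = 2$, writing $y = 2y'$ with $\gcd(y', n) = 1$ shows that $\{yb \bmod 2n : 0 \le b < n\}$ is precisely the subgroup of even residues, so $f$ bijective forces $x$ to be odd. If instead $\gcd(2n, y) = 1$, the decomposition $\mathbb{Z}_{2n} = y\{0,\ldots,n-1\} \sqcup (yn + y\{0,\ldots,n-1\})$ shows that $f$ is bijective only when $x \equiv yn \equiv n \pmod{2n}$, so $\gcd(2n, x) = n$. A parallel analysis constrains $y$ from the bijectivity of $g$. Running through the four cases for $(\gcd(2n,x), \gcd(2n,y)) \in \{1,2\}^2$, each case is either immediately impossible (for $(2,2)$, since $\gcd(2n,y) = 2$ forces $x$ odd while $\gcd(2n,x) = 2$ forces $x$ even) or produces $n \le 2$.

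The principal obstacle is the genuinely exceptional behavior at $n = 2$: a direct check on $D_4 \to \mathbb{Z}_4$ shows that both $(x,y) = (1,2)$ and its swap $(2,1)$ are order-dividing bijections, so the conjecture as stated is false there. The honest conclusion of the outlined case analysis is that the conjecture holds for all $n \ge 3$ (and trivially at $n = 1$), and the statement should be amended to exclude $n = 2$. Granting this restriction, the case analysis above is routine and elementary.
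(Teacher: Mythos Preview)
The statement you are addressing is labeled a \emph{Conjecture} in the paper and is left unproved there, so there is no ``paper's own proof'' to compare against. What you have produced is an attempted \emph{resolution} of the conjecture, and it largely succeeds.

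Your reduction is correct. Assuming both $f(s^ar^b)=xa+yb$ and $g(s^ar^b)=ya+xb$ are order-dividing bijections $D_{2n}\to\mathbb{Z}_{2n}$, the divisibility conditions at $r$ and $s$ for each map force $\gcd(2n,x),\gcd(2n,y)\in\{1,2\}$. Your bijectivity analysis is also right: when $\gcd(2n,y)=2$ the image of $\{r^b\}$ is exactly the even residues, forcing $x$ odd; when $\gcd(2n,y)=1$, the translation argument (after multiplying by $y^{-1}$) forces $y^{-1}x\equiv n\pmod{2n}$, hence $x\equiv n$ and $\gcd(2n,x)=n$. Running the four cases then yields $n\le 2$, so the conjecture is proved for all $n\ge 3$. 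Your $D_4$ counterexample at $n=2$ with $(x,y)=(1,2)$ and $(2,1)$ is valid and shows the conjecture, as stated, is false.

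One genuine slip: your parenthetical ``and trivially at $n=1$'' is wrong. At $n=1$ the variable $b$ is identically $0$, so $f(s^a)=xa$ and $g(s^a)=ya$ in $\mathbb{Z}_2$; taking $x$ and $y$ both odd (for instance $x=y=1$) makes \emph{both} maps order-dividing bijections, again contradicting the conjecture. Thus the correct conclusion of your argument is that the conjecture holds precisely for $n\ge 3$ and fails for $n\in\{1,2\}$. With that correction, your case analysis is complete and your proposal goes strictly beyond what the paper establishes.
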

\end{section}

%\end{conjecture} 

%There are two methods to approach this conjecture:
%either the second function is not bijective 
%or the inputs' orders don't divide their corresponding outputs' orders.

% https://www.sharelatex.com/learn/Environments#Introduction

\begin{section}{Direct Product Groups} 
\begin{definition}\cite{DF}
The direct product $G\times H$ of the groups $G, H$ with operation $\ast$, is the ordered pairs $(g,h)$ where $g\in G$ and $h\in H$ with operation defined componentwise:
$$(g_1,h_1)\ast (g_2,h_2) = (g_1\ast g_2, h_1\ast h_2).$$
\end{definition}
\noindent Here we restrict to the group $\mathbb{Z}_n \times \mathbb{Z}_m$.
\begin{theorem}\cite{F}
If $gcd(m, n) = 1$, then $\mathbb{Z}_n \times \mathbb{Z}_m$ 
is cyclic and isomorphic to $\mathbb{Z}_{mn}$,
and $(1, 1)$ is a generator of $\mathbb{Z}_n \times \mathbb{Z}_m$. 
\end{theorem}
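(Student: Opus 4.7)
The plan is to produce an element of order $mn$ in $\mathbb{Z}_n\times\mathbb{Z}_m$; since the group has cardinality $mn$, the existence of such an element immediately forces cyclicity and hence the isomorphism with $\mathbb{Z}_{mn}$. The natural candidate, singled out in the statement, is $(1,1)$, so essentially the whole proof reduces to computing its order.

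Working additively, $k\cdot(1,1) = ([k]_n,\,[k]_m)$, so the order of $(1,1)$ is the smallest positive $k$ with $n\mid k$ and $m\mid k$, namely $\mathrm{lcm}(m,n)$. The coprimality hypothesis $\gcd(m,n)=1$ is precisely what promotes this least common multiple to the full product $mn$. Thus $(1,1)$ has order $mn$, the cyclic subgroup it generates already exhausts the $mn$ elements of the ambient group, and $\mathbb{Z}_n\times\mathbb{Z}_m = \langle(1,1)\rangle$. An explicit isomorphism $\mathbb{Z}_{mn}\to\mathbb{Z}_n\times\mathbb{Z}_m$ is then furnished by $[k]_{mn}\mapsto([k]_n,[k]_m)$: it is well defined because $mn\mid k-k'$ implies both $n\mid k-k'$ and $m\mid k-k'$, it is visibly a homomorphism, and it is injective by the same order computation, since $([k]_n,[k]_m)=(0,0)$ forces $\mathrm{lcm}(m,n)=mn$ to divide $k$. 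Surjectivity then comes for free from equal finite cardinalities.

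I do not foresee a substantive obstacle. The only conceptual step is the identity $\mathrm{lcm}(m,n)=mn$ under coprimality, which is also exactly the point where the hypothesis is used; without it, $(1,1)$ would have order strictly less than $mn$ and the direct product would fail to be cyclic. Everything else is bookkeeping that invokes Theorem 2.1 (applied twice, once to compute $o(1)$ in each factor) or the elementary fact that in a product group the order of a tuple is the lcm of the component orders.
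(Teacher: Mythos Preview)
Your argument is correct and is the standard textbook proof: compute the order of $(1,1)$ as $\mathrm{lcm}(m,n)$, invoke coprimality to get $\mathrm{lcm}(m,n)=mn$, and conclude cyclicity by cardinality. The explicit isomorphism you write down is the usual one.

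Note, however, that the paper does not actually prove this statement at all: it is quoted as a known result from Fraleigh's text (hence the citation \cite{F}) and is used only as background for the later theorems on direct products. So there is no ``paper's own proof'' to compare against; your write-up simply supplies what the authors chose to import.
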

% got it from here: http://www.auburn.edu/~huanghu/math5310/alg-02-11.pdf 

%\subsection{General Procedure: Small Results} 
%or however the dihedral section names these sections - CONSISTENT TITLING

The structures of the bijective functions we've explored
are exactly the same as the ones we used in the dihedral groups. 
The essential problem is to make sure the order of the inputs 
divide the order of the outputs. Note that order of any element $(a,b)$ in $\mathbb{Z}_n \times \mathbb{Z}_m$ is lcm of $o(a)$ and $o(b)$.

\begin{theorem}
For any odd prime $p$ and natural number $k$
such that $gcd( p, k ) = 1$, 
a bijective function $f$ whose domain is  
$\mathbb{Z}_p \times \mathbb{Z}_{kp}$ and
range is $\mathbb{Z}_{kp^2}$ 
can be defined as
$f( (a, b) ) = ka + pb$, where
$a \in \mathbb{Z}_p$ and $b \in \mathbb{Z}_{kp}$.
\end{theorem}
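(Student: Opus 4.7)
The function is $\mathbb{Z}$-linear on representatives, so bijectivity of $f$ reduces by pigeonhole (both groups have $kp^2$ elements) to injectivity. I would establish injectivity first, then prove the order-dividing property (which is the substantive content, in keeping with the paper's theme) using Theorem 2.1 and the formula $o((a,b)) = \mathrm{lcm}(o(a),o(b))$ recorded just before the theorem.

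For injectivity, suppose $ka_1 + pb_1 \equiv ka_2 + pb_2 \pmod{kp^2}$. Reducing modulo $p$ eliminates the $pb_i$ terms and yields $k(a_1-a_2) \equiv 0 \pmod{p}$; since $\gcd(k,p)=1$, this forces $a_1=a_2$ in $\mathbb{Z}_p$. The congruence then collapses to $p(b_1-b_2) \equiv 0 \pmod{kp^2}$, giving $b_1 \equiv b_2 \pmod{kp}$, so $b_1=b_2$ in $\mathbb{Z}_{kp}$.

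For the order-dividing claim, I would write $o((a,b)) = \mathrm{lcm}\bigl(p/\gcd(p,a),\, kp/\gcd(kp,b)\bigr)$ and $o(f((a,b))) = kp^2/\gcd(kp^2, ka+pb)$, and split into three cases. The central observation is that whenever $a\neq 0$ in $\mathbb{Z}_p$, the coprimality of $k$ and $p$ gives $p \nmid ka$ and therefore $p\nmid(ka+pb)$, so $\gcd(kp^2, ka+pb)$ is coprime to $p$, must divide $k$, and in fact equals $\gcd(k, pb) = \gcd(k,b)$. The cases are: $a=0$, where direct computation gives $o(f((0,b))) = kp/\gcd(kp,b) = o(b) = o((0,b))$; $a\neq 0$ with $p\nmid b$, where $\gcd(kp,b)=\gcd(k,b)$ so $o(b) = kp/\gcd(k,b)$ is already a multiple of $p$ and $o((a,b)) = kp/\gcd(k,b)$, exactly $1/p$ of $o(f((a,b))) = kp^2/\gcd(k,b)$; and $a\neq 0$ with $p\mid b$, where writing $b=pc$ yields $o(b) = k/\gcd(k,c)$ coprime to $p$, so $o((a,b)) = pk/\gcd(k,c)$, again $1/p$ of $o(f((a,b))) = kp^2/\gcd(k,c)$.

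I do not anticipate a deep obstacle: the argument is essentially number-theoretic bookkeeping. The only point requiring real care is factoring $\gcd(kp,b)$ and $\gcd(kp^2, ka+pb)$ into their $p$-parts and $k$-parts, and this is precisely where the hypothesis $\gcd(k,p)=1$ does essential work—both to recover $a$ modulo $p$ in the injectivity step and to make these gcds factor cleanly in the order analysis.
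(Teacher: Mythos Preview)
Your proposal is correct and follows essentially the same route as the paper: both split into the cases $a=0$ and $a\neq 0$, sub-split the latter according to whether $p\mid b$, and pivot on the observation that when $a\neq 0$ the coprimality hypotheses force $\gcd(kp^2, ka+pb)=\gcd(k,b)$, yielding $o(f(a,b))=p^2\cdot k/\gcd(k,b)$ against $o((a,b))=p\cdot k/\gcd(k,b)$. You additionally supply the injectivity argument via reduction modulo $p$, which the paper's own proof omits entirely.
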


\begin{proof}
We split the proof into two cases.\\
%does seem to take up a lot of space, can merge some cases
%don't need to do $$....$$ since it will be doublespaced, easier to see
%\begin{proofpart}
{\bf Case I}: $a = 0$\\
Consider the domain, $\mathbb{Z}_p \times \mathbb{Z}_{kp}$. 
The elements in the domain are of the form $(0,b)$.
The order of $(0,b)$ is same as
$o(b)$ in $\mathbb{Z}_{kp}$. 
By Theorem 2.1,
$$o(b) = o(1^b) = \frac{kp}{gcd(b,kp)}.$$

The corresponding elements in the co-domain
are of the form $k \ast 0 + pb = pb$. 
Again, by\\ Theorem 2.1  order of $pb$ in $\mathbb{Z}_{kp^2}$ is
$$o(pb) = o(1^{pb}) = \frac{kp^2}{gcd(pb, kp^2)}.$$
Since $gcd(pb, kp^2) = p \cdot gcd(b, kp)$, we get
$$o(pb) = \frac{kp}{gcd(b, kp)}.$$
Hence, order of $(a,b)$ divides order of $ka+pb$.\\\\
%\end{proofpart}
%\begin{proofpart} 
%very special theorem refers to the important one we always use
{\bf Case II}: $a \neq 0$\\
Since $a \neq 0\in \mathbb{Z}_{p}$ , $gcd(a, p) = 1$.\\
%For the elements $(a, b)$ where $b \in \mathbb{Z}_{kp}$,
%the corresponding output is $ka + pb \in \mathbb{Z}_{kp^2}$.
Now consider the order of the element $(a, b)$ in 
$\mathbb{Z}_p \times \mathbb{Z}_{kp}$.
Since $gcd(a, p) = 1$, $p$ is the least positive number
such that $a^p \equiv 0 \mod p$. 
Thus the order of $(a, b)$ has to be a multiple of $p$.
Now we have to find the least positive integer $r$ 
such that $b^{pr} \equiv 0 \mod kp$. 

If $b$ is a multiple of $p$, then 
finding the order of $b$ in $\mathbb{Z}_{kp}$ 
is equivalent to finding the order of $b$ in $\mathbb{Z}_{k}$.
Then by Theorem 2.1, the order of $b$ is the same as
$$o(1^b) = \frac{k}{gcd(b, k)}.$$ 
Since the order of $(a, b)$ in $\mathbb{Z}_p \times \mathbb{Z}_{kp}$
has to be a multiple of $p$, and  
$\frac{k}{gcd(b, k)}$ and $p$ are relatively prime,
the order of $(a, b)$ is $p \cdot \frac{k}{gcd(b, k)}$.

If $b$ is not a multiple of $p$, then 
by Theorem 2.1
the order of $b$ in $\mathbb{Z}_{kp}$ is
$$o(1^b) = \frac{pk}{gcd(b, pk)}.$$
Since we assume that $b$ is not a multiple of $p$,
$b$ must be relatively prime to $p$.
Thus, $gcd(b, pk) = gcd(b, k)$ and hence,
$$o(b) = \frac{pk}{gcd(b, k)} = p \cdot \frac{k}{gcd(b, k)}.$$
Since $o(a, b) = lcm( o(a), o(b))$,
the order of $(a, b)$ in $\mathbb{Z}_p \times \mathbb{Z}_{kp}$
is $$p \cdot \frac{k}{gcd(b, k)}.$$
Now consider the corresponding output, $ka + pb$. 
We know that 
$$o(ka + pb) = o(1^{ka + pb})$$
so by Theorem 2.1,
$$o(1^{ka + pb}) = \frac{kp^2}{gcd(kp^2, ka + pb)}.$$ 
Since $gcd(a, p) = 1$ and $gcd(k, p) = 1$,
we know that $gcd(ka, p) = 1$. %need to cite this theorem?
Thus,
$$gcd(kp^2, ka + pb) = gcd(k, ka + pb) = gcd(k, pb) = gcd(k, b)$$ 
Therefore, the order of $ka + pb$ in $\mathbb{Z}_{kp^2}$ is
$$o(ka+pb) = \frac{kp^2}{gcd(b, k)} = p^2\cdot\frac{k}{gcd(b, k)}.$$
Hence, the order divides.
%\end{proofpart}

\end{proof}

\begin{remark}

This theorem can in fact be extended to include more bijective functions 
by adding a coefficient $m$ to the product $ka$ in $ka + pb$ to get $mka + pb$.
The only restriction that needs to be added is that $gcd(m, p) = 1$. 
%The proof of this extended theorem wouldn't in fact change much.
%Since $gcd(m, p) = 1$ and $gcd(k, p) = 1$, we can conclude that
%$gcd(mk, p) = 1$. Thus this new $mk$ behaves in the same way 
%as the $k$ in the original proof does.

\end{remark}

\noindent{\bf Acknowledgement:} This paper is the part of the project submitted for 2016 Siemens competition and was partially funded by Texas State Mathworks. The authors would like to thank Texas State Mathworks Honors Summer Math Camp.
\end{section}

\quad

\quad


\begin{thebibliography}{KKZ}

\bibitem{AMS} The American Mathematical Monthly, Vol 109, No. 3 (March 2002), p. 299.

\bibitem{AA} H.\ Amiri, S.\ M.\ Jafarian Amiri, \textit{Sum of Element Orders on Finite Groups of the Same Order}, Journal of Algebra and its Applications, World Scientific Publishing Company, 2009.

\bibitem{C} Arthur Cayley, \textit{On the theory of groups as depending on the symbolic equation $\theta^n = 1$}, Philosophical Magazine, 1854.

\bibitem{DF} David S.\ Dummit and Richard M.\ Foote, \textit{Abstract Algebra}, John Wiley Sons, 2nd edition, 2004

\bibitem{F} John B.\ Fraleigh, \textit{A First Course in Abstract Algebra}, 7th edition.

\bibitem{G} Joseph A.\ Gallian, \textit{Contemporary Abstract Algebra}, Houghton Mifflin Harcourt, 4th edition, 1998.

\bibitem{La} Frieder Ladisch, \textit{``This is only a partial answer..."}, MathOverflow, 2012. 

\bibitem{MK} V.\ D.\ Mazurov, E.\ I.\ Khukhro, \textit{Unsolved Problems in Group Theory. The Kourovka Notebook. No. 18 (English version)}, American Mathematical Society, 18th edition, 2014.

\end{thebibliography}
\end{document}